\title{A remark on the approximation of non-negative polynomials by SONC polynomials}
\author{Gennadiy~Averkov\footnote{Brandenburg University of Technology, Platz der Deutschen Einheit 1, 03046 Cottbus, Germany}}
\newcommand{\R}{\mathbb{R}}
\newcommand{\Z}{\mathbb{Z}}
\newcommand{\setcond}[2]{\left\{#1\,:\,#2\right\}}
\newcommand{\ones}{\mathbbm{1}}
\newtheorem*{thm*}{Theorem}
\newtheorem*{claim*}{Claim}
\newtheorem*{rem*}{Remark}
\begin{document}
\maketitle

\begin{abstract} 
	A SONC polynomial is a sum of finitely many non-negative circuit polynomials, whereas  a non-negative circuit polynomial is a non-negative polynomial whose support is a simplicial circuit. 
	We show that there exist non-negative polynomials that cannot be uniformly approximated by SONC polynomials arbitrarily well. 
\end{abstract} 

\newcommand{\vx}{\mathbf{x}}
\newcommand{\supp}{\operatorname{supp}}
\newcommand{\circuits}{\operatorname{circuits}}

\section{Introduction} 

In the algebra $\R[\vx]:=\R[x_1,\ldots,x_n]$ of real polynomials in  $n \in \Z_{\ge 1}$ variables we consider  the infinite-dimensional convex cones 
\begin{align*}
	P_n & := \setcond{f \in \R[\vx]}{f  \ge 0 \ \text{on} \ \R^n} & & \text{and} & 
 C_n & := \setcond{f \in \R[\vx] }{f \ \text{is SONC}}.
\end{align*}
The definition of SONC polynomials is as follows. The \emph{support} $\supp f$ of a polynomial $f \in \R[\vx]$ is the set of all exponent vectors $\alpha = (\alpha_1,\ldots,\alpha_n) \in \Z_{\ge 0}^n$ with the property that the monomial $\vx^{\alpha} := x_1^{\alpha_1} \cdots x_n^{\alpha_n}$ occurs in $f$ with a non-zero coefficient. A \emph{circuit polynomial} is a polynomial in $\R[\vx]$ with the support satisfying $A \subseteq \supp f \subseteq A \cup \{\beta\}$, for some affinely independent subset $A$ of  $2 \Z_{\ge 0}^n$ and an exponent vector $\beta \in \Z_{\ge 0}^n$  in the relative interior of the convex hull of $A$, such that the coefficients of $f$ for the monomials $\vx^{\alpha}$ with $\alpha \in A$ are   positive. The  special case $A = \{\beta\}$ corresponds to a degenerate circuit polynomial with only one term. A \emph{non-negative circuit polynomial} is a circuit polynomial that is non-negative on $\R^n$.
A \emph{SONC polynomial} is a sum of finitely many non-negative circuit polynomials. SONC polynomials were introduced in \cite{iliman2016amoebas} with the aim of developing tractable solution methods in  polynomial optimization. In this context, the authors of  \cite[Sect.~5]{katthan2021unified} asked whether SONC polynomials can uniformly approximate non-negative polynomials arbitrarily well. We show that the answer to this question is negative. 
For a compact subset $K$ of $\R^n$ with non-empty interior, we use the norm
\[
	\|f\|_K := \max_{x \in K} |f(x)|
\]
on the space $\R[\vx]$ to express uniform approximation on $K$.

\begin{thm*}
	Let $K$ be a compact subset of $\R^n$ with non-empty interior and let  $d \in \Z_{\ge 3}$. Then there exists a polynomial $f \in P_n$ of degree $2 d$ that satisfies
	 \[
	 	\inf_{g \in C_n} \| f - g \|_K > 0. 
	 \]
	 Moreover, one can choose $f$ to be a square of a polynomial of degree $d$ from $\R[\vx]$. 
\end{thm*}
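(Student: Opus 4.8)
The plan is to dualize. The space $C(K)$ with the sup-norm $\|\cdot\|_K$ has as its dual the finite signed Borel measures on $K$, and $C_n\subseteq C(K)$ is a convex cone; so $f$ fails to be a $\|\cdot\|_K$-limit of SONC polynomials exactly when $f$ can be separated from the closed cone $\overline{C_n}$. Concretely, I would first reduce the whole theorem to producing a single finite signed measure $\mu$ on $K$ together with a suitable $f$ such that
\[
	\int g\,d\mu \ge 0 \ \text{ for all } g\in C_n, \qquad \int f\,d\mu < 0 .
\]
Indeed, for every $g\in C_n$ this gives $\int(g-f)\,d\mu \ge -\!\int f\,d\mu > 0$, while $\int(g-f)\,d\mu \le \|\mu\|\,\|g-f\|_K$ with $\|\mu\|$ the total variation, so $\|f-g\|_K \ge \big(-\!\int f\,d\mu\big)/\|\mu\| > 0$ uniformly in $g$.

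Second, I would characterize the measures that are non-negative on $C_n$. Since $C_n$ is generated by non-negative circuit polynomials, and these are exactly the circuit polynomials whose inner coefficient obeys the AM-GM circuit-number bound, $\int g\,d\mu\ge0$ for all $g\in C_n$ is equivalent to the same inequality for every non-negative circuit polynomial. Writing $M_\alpha:=\int \vx^\alpha\,d\mu$ and optimizing over the positive coefficients on a simplex $A\subseteq 2\Z_{\ge0}^n$ and over the admissible inner coefficient, this collapses to two families of conditions: $M_{2\gamma}\ge0$ for all $\gamma$ (from degenerate one-term circuits), and $|M_\beta|\le \prod_{\alpha\in A} M_\alpha^{\lambda_\alpha}$ for every simplicial circuit with barycentric coordinates $\beta=\sum_\alpha \lambda_\alpha\alpha$. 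In one variable these read: even moments non-negative, $j\mapsto \log M_{2j}$ convex, and $M_{2b+1}^2\le M_{2b}M_{2b+2}$.

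Third, I would construct $\mu$ and the square $f=p^2$. The key observation is that the dual-SONC conditions above are precisely the \emph{local}, $2\times2$-minor-type consequences of positivity of the moment matrix $H_d:=(M_{\alpha+\beta})_{\deg\le d}$, but not its full positive semidefiniteness. So I would search for a finite signed measure on a box in $\R^n$ whose moments satisfy all circuit inequalities while $H_d$ is not positive semidefinite; any $p$ of degree $d$ with $p^\top H_d\,p<0$ then yields a perfect square $f=p^2$ of degree $2d$ with $\int f\,d\mu<0$, which also settles the ``Moreover'' clause. The hypothesis $d\ge3$ should enter here as a genuine threshold: for $d\le 2$ the dual-SONC inequalities already force $H_d\succeq0$ (for instance, in one variable a reflection-symmetric measure makes $H_2$ block-diagonal with blocks equal to the log-convexity $2\times2$ minors), so no separation exists, whereas for $d\ge3$ there is room between ``dual-SONC'' and ``moments of a positive measure''.

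The main obstacle is to build a measure that simultaneously is a genuine finite signed measure on the compact set $K$ (so that the duality of the first step applies), satisfies the infinitely many circuit inequalities at \emph{all} degrees, and still violates positive semidefiniteness of $H_d$. The tension is real: a log-convex moment sequence that is also exponentially bounded—as compact support forces—is close to being the moment sequence of a positive measure, which would render every $H_d$ positive semidefinite and destroy the desired strict negativity. I expect to resolve this by writing $\mu$ explicitly as a fixed positive measure plus a small, carefully localized signed perturbation—in the $n$-variate case chosen symmetric in $x_2,\dots,x_n$, so that only the circuits supported on the $x_1$-axis are binding—tuned so that the high-degree circuit inequalities are dominated by the positive part while the order-$d$ block is pushed just outside the positive-semidefinite cone. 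Checking that every circuit inequality survives this perturbation is the technical heart of the argument.
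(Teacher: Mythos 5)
Your framework is essentially the paper's own, recast in dual language: the paper's separating functional is exactly a signed measure of the kind you postulate, namely $\mu = \delta_{(1,\ones_{n-1})} - \delta_{(u,\ones_{n-1})} + \delta_{(u^2,\ones_{n-1})} + \delta_{(u^3,\ones_{n-1})}$ with $u>1$ small enough that all four points lie in $K$, and your first two steps are correct: the total-variation bound does give a uniform gap, and non-negativity on $C_n$ does reduce, via the circuit-number criterion, to the moment inequalities $M_{2\gamma}\ge 0$ and $|M_\beta|\le \prod_{\alpha\in A} M_\alpha^{\lambda_\alpha}$. The genuine gap is your third step: no measure is actually constructed, and its construction is the entire content of the theorem. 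You yourself call the verification of the infinitely many circuit inequalities ``the technical heart of the argument'' and defer it (``I would search\ldots'', ``I expect to resolve this\ldots''); nothing in the proposal shows that a signed measure on a compact set can satisfy all circuit inequalities while its degree-$d$ moment matrix fails to be positive semidefinite, and your own ``tension'' paragraph concedes that you do not yet know this is possible.

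Beyond incompleteness, the specific ansatz you propose would likely fail as stated. If the fixed positive part $\mu_+$ has infinite support (say, Lebesgue measure on a box), its degree-$d$ moment matrix is positive \emph{definite}, so $\int p^2\,d\mu_+ \ge \lambda_{\min}\|p\|^2$ with $\lambda_{\min}>0$; a perturbation that pushes $H_d$ outside the PSD cone therefore cannot be small in total variation, which undercuts the ``small perturbation'' mechanism you rely on to preserve the high-degree circuit inequalities. The clean resolution, which is what the paper does, is the opposite: make the positive part \emph{finitely} supported (three points, at $x_1\in\{1,u^2,u^3\}$), put the negative mass at $x_1=u$ strictly between them, and take $f=p^2$ with $p=(x_1-1)(x_1-u^2)(x_1-u^3)^{d-2}$, so that $p$ vanishes on the support of the positive part and $\int f\,d\mu = -f(u)<0$ trivially. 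This also reveals the true role of the hypothesis $d\ge 3$: one needs $\deg p\ge 3$ to kill three points, not a dual-cone threshold at $d\le 2$ (your parenthetical claim there is unsupported, and the paper explicitly leaves the characterization of valid $(n,d)$ open). Finally, the circuit inequalities then reduce to one clean fact: $M_\alpha = \phi(\alpha_1\ln u)$ with $\phi(t)=1-e^t+e^{2t}+e^{3t}$, and $\ln\phi$ is convex on $\R_{\ge 0}$, proved by an explicit polynomial identity after substituting $y=e^t$ — precisely the kind of verification your proposal leaves open, and a counterexample to your heuristic that compactly supported log-convex moment sequences must behave like moments of positive measures.
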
 


\section{Proof} 

Pick an interior point $a=(a_1,\ldots,a_n)$ of $K$ that satisfies $a_i \ne 0$ for each $i =1,\ldots,n$. The linear isomorphism $f(x_1,\ldots,x_n) \mapsto h(x_1,\ldots,x_n) := f(a_1 x_1, \ldots, a_n x_n)$ of $\R[\vx]$ keeps $P_n$ and $C_n$ invariant and satisfies $f(a_1,\ldots,a_n) = h(1,\ldots,1)$ as well as $\|f\|_K = \|h\|_H$ for 
\[
	H := \setcond{(x_1,\ldots,x_n) \in \R^n}{(a_1 x_1,\ldots, a_n x_n) \in K}.
\] 
Thus, we can assume without loss of generality that the \emph{all-ones vector} $\ones_n := (1,\ldots,1) \in \R^n$ is in the interior of $K$. Fix $u>1$ close enough to $1$ so that $(u^j,\ones_{n-1}) \in K$ holds for every $j=0,1,2,3$ and consider the linear functional $L : \R[\vx] \mapsto \R$, given as a linear combination of four evaluations as follows: 
\[
	L[f] := f(u^0,\ones_{n-1}) - f(u^1,\ones_{n-1}) + f(u^2,\ones_{n-1}) + f(u^3,\ones_{n-1}). 
\] 

\begin{claim*}
	$L[g] \ge 0 $ holds for every $g \in C_n$. 
\end{claim*}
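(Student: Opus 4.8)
The plan is to use linearity of $L$ to reduce to a single non-negative circuit polynomial, and then to turn everything into a one-variable inequality via the restriction to the line through $\ones_n$ in the $x_1$-direction. Since every $g \in C_n$ is a finite sum of non-negative circuit polynomials and $L$ is linear, it suffices to treat the case where $g$ itself is a non-negative circuit polynomial, with support in $A \cup \{\beta\}$, positive coefficients $c_\alpha$ for $\alpha \in A$, and a coefficient $c_\beta$ of arbitrary sign. Evaluating a monomial at $(u^j,\ones_{n-1})$ gives $\vx^\alpha \mapsto u^{\alpha_1 j}$, so with $w_\alpha := u^{\alpha_1}$ and $p(w) := 1 - w + w^2 + w^3$ one gets $L[\vx^\alpha] = 1 - w_\alpha + w_\alpha^2 + w_\alpha^3 = p(w_\alpha)$, and hence by linearity, in the non-degenerate case $\beta \notin A$,
\[
  L[g] = \sum_{\alpha \in A} c_\alpha\, p(w_\alpha) + c_\beta\, p(w_\beta).
\]
Because $u>1$ and all first coordinates are non-negative, every $w_\alpha$ and $w_\beta$ lies in $[1,\infty)$; since $p(1)=2$ and $p'(w)=(3w-1)(w+1)>0$ for $w\ge 1$, we have $p\ge 2>0$ there. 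Thus if $c_\beta\ge 0$ every summand is non-negative and $L[g]\ge 0$, while the degenerate single-term case $g=c_\beta\vx^\beta$ with $c_\beta>0$ gives $L[g]=c_\beta\,p(w_\beta)>0$ directly.

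The remaining case is $c_\beta<0$; write $b:=-c_\beta>0$. Here I would bring in the circuit-number/AM--GM machinery. Writing the interior point in barycentric coordinates, $\beta=\sum_{\alpha\in A}\lambda_\alpha\alpha$ with $\lambda_\alpha>0$ and $\sum_\alpha\lambda_\alpha=1$, non-negativity of $g$ forces $b\le\Theta$, where $\Theta:=\prod_{\alpha\in A}(c_\alpha/\lambda_\alpha)^{\lambda_\alpha}$ is the circuit number. Applying weighted AM--GM to the positive numbers $c_\alpha\,p(w_\alpha)/\lambda_\alpha$ yields
\[
  \sum_{\alpha\in A} c_\alpha\, p(w_\alpha) \;\ge\; \Theta \prod_{\alpha\in A} p(w_\alpha)^{\lambda_\alpha},
\]
and, since $\beta_1=\sum_\alpha\lambda_\alpha\alpha_1$, the exponent bookkeeping gives $w_\beta=\prod_\alpha w_\alpha^{\lambda_\alpha}$, i.e. $w_\beta$ is the $\lambda$-weighted geometric mean of the $w_\alpha$.

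The crux — and the step I expect to be the main obstacle — is to push the geometric mean through $p$, i.e. to establish
\[
  \prod_{\alpha\in A} p(w_\alpha)^{\lambda_\alpha} \;\ge\; p\Bigl(\prod_{\alpha\in A} w_\alpha^{\lambda_\alpha}\Bigr) = p(w_\beta).
\]
This is precisely the assertion that $s\mapsto\log p(e^s)$ is convex for $s\ge 0$. The usual log-sum-exp argument does not apply, because the coefficient of $w$ in $p$ is negative, so I would verify convexity by hand: a direct computation reduces $\frac{d^2}{ds^2}\log p(e^s)\ge 0$ to the polynomial inequality $N(w):=w^4-4w^3+8w^2+4w-1\ge 0$ for $w\ge 1$, which holds since $N(1)=8$ while $N''>0$ and $N'(1)=12>0$ make $N$ increasing on $[1,\infty)$. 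Chaining the three displayed inequalities together with $\Theta\ge b$ and $p(w_\beta)>0$ gives $\sum_\alpha c_\alpha p(w_\alpha)\ge b\,p(w_\beta)=-c_\beta\,p(w_\beta)$, whence $L[g]\ge 0$. I would close by remarking that the construction is tuned exactly to make $p$ both positive and log-convex on $[1,\infty)$, which is what dictates the $+,-,+,+$ sign pattern and the cubic degree of $p$.
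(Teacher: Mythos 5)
Your proof is correct and takes essentially the same route as the paper: reduction to a single non-negative circuit polynomial by linearity, weighted AM--GM producing the circuit number $\Theta$, log-convexity of $p$ along the exponential substitution (the paper's $\phi(t)=p(e^t)$), and the criterion $\Theta + c_\beta \ge 0$ for non-negative circuit polynomials. The only cosmetic differences are your case split on the sign of $c_\beta$ (the paper handles both signs uniformly) and your calculus verification of $N(w)=w^4-4w^3+8w^2+4w-1\ge 0$ on $[1,\infty)$, where the paper instead exhibits the algebraic certificate $N(w)=(w-1)^4+2(w-1)^2+12(w-1)+8$.
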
 
\begin{proof}[Proof of Claim]
	Non-negative circuit polynomials generate the convex cone $C_n$. Thus, since $L$ is a linear functional it suffices to check $L[g] \ge 0$  for every non-negative circuit polynomial $g$. If $g$ is degenerate, one has $g = c \vx^\beta$ with $ c \in \R_{>0}$ and $\beta = (\beta_1,\ldots,\beta_n) \in 2 \Z_{\ge 0}^n$, then  $L[g] \ge 0$ is easy to verify: one has $L[g] = L[c \vx^\beta] = c L[\vx^\beta]$, where $c>0$ and $L[\vx^\beta] = L[x_1^{\beta_1}] = 1 - u^{\beta_1} + u^{2 \beta_1} + u^{3 \beta_1} > 0$. 
	
	If $g$ is non-degenerate, it has the form 
	\[
		g = \sum_{\alpha \in A} c_\alpha \vx^{\alpha} + c_\beta \vx^\beta
	\]
	with the coefficients $c_\alpha \in \R_{>0}$ ($\alpha \in A$) and $c_\beta \in \R$, where 
 $A$ is a set of at least two affinely indepent exponent vectors in $2 \Z_{\ge 0}^n$ and $\beta \in \Z_{\ge 0}^n$ is an exponent vector in the relative interior of the convex hull of $A$. Using the notation $\alpha = (\alpha_1,\ldots,\alpha_n)$ and $\beta = (\beta_1,\ldots,\beta_n)$ for the components of $\alpha$ and $\beta$, we  express $L[g]$ as 
 \begin{equation} \label{L:g} 
 		L[g] = \sum_{\alpha \in A} c_\alpha L[x_1^{\alpha_1}] + c_\beta  L[x_1^{\beta}]. 
 \end{equation}
Using $\phi(t):= 1 - e^t + e^{2t} + e^{3t}$, equality \eqref{L:g} becomes 
\begin{equation} \label{L:phi}
	L[g] = \sum_{\alpha \in A} c_\alpha \phi( \alpha_1 \ln u) + c_\beta \phi(\beta_1 \ln u), 
\end{equation}
where $\ln$ is the natural logarithm. Note that since $u>1$, one has $\ln u > 0$. It turns out that $\ln \phi(t)$ is convex on $\R_{\ge 0}$. For checking this, we use the inequality $(\ln \phi(t))'' \ge 0$, which amounts to $\phi''(t) \phi(t) - \phi'(t)^2 \ge 0$. The latter is the inequality 
\begin{equation} \label{e:t}
	(-e^t + 4 e^{2t} + 9 e^{3t} ) ( 1 - e^t + e^{2t} + e^{3t} ) - (-e^t + 2 e^{2t} + 3 e^{3t})^2 \ge 0.  
\end{equation}
Via the substitution $y = e^t$,  inequality \eqref{e:t}, which we want to verify for all  $t \in \R_{\ge 0}$,  gets converted to the polynomial inequality 
\[
	p(y):= (-y + 4 y^2 + 9 y^3) (1-y + y^2 + y^3) - (-y+ 2 y^2 + 3 y^3) \ge 0
\]
for  $y \in \R_{\ge 1}$.  The identity $p(y) = y \bigl( (y-1)^4 + 2 (y-1)^2  + 12 (y-1) + 8 \bigr)$ is a witness for the non-negativity of $p(y)$ on $\R_{\ge 1}$. 

Below, we use the convexity of $\ln \phi(t)$ for bounding $L[g]$ from below. The exponent vector $\beta$ can be expressed uniquely as the convex combination
\[
	\beta = \sum_{\alpha \in A} \lambda_\alpha \alpha,
\]
where $\lambda_\alpha \in \R_{>0}$ for each $\alpha \in A$ and $\sum_{\alpha \in A} \lambda_\alpha = 1$. Expressing the sum $\sum_{\alpha \in A} c_\alpha \phi(\alpha_1 \ln u)$ in \eqref{L:phi} as the weighted arithmetic mean 
\(
	\sum_{\alpha \in A} \lambda_\alpha \frac{c_\alpha \phi(\alpha_1 \ln u)}{\lambda_\alpha}
\)
with the weights $\lambda_\alpha$ ($\alpha \in A$) and  estimating the weighted arithmetic mean by the weighted geometric mean, we obtain
\begin{align*}
	L[g] & \ge \prod_{\alpha \in A} \left( \frac{c_\alpha }{\lambda_\alpha}\right)^{\lambda_\alpha} \phi(\alpha_1 \ln u)^{\lambda_\alpha} + c_\beta \phi( \beta_1 \ln u)
		  = \Theta_g \prod_{\alpha \in A} \phi(\alpha_1 \ln u)^{\lambda_\alpha} + c_\beta \phi(\beta_1 \ln u),
\end{align*}
where $\Theta_g := \prod_{\alpha \in A} \left( \frac{c_\alpha }{\lambda_\alpha}\right)^{\lambda_\alpha} $ is the so-called \emph{circuit number} of the circuit polynomial $g$. As $\ln \phi(t)$ is convex on $\R_{\ge 0}$, we have 
\[
	\prod_{\alpha \in A} \phi(\alpha_1 \ln u)^{\lambda_\alpha} \ge \phi \left( \sum_{\alpha \in A} \lambda_\alpha \alpha_1 \ln u \right)  = \phi(\beta_1 \ln u), 
\]
which yields 
\[
	L[g] \ge (\Theta_g + c_\beta) \, \phi( \beta_1 \ln u) 
\]
Since the value $\phi(\beta_1 \ln u) = 1 - u^{\beta_1} + u^{2 \beta_1} + u^{3 \beta_1}$ is obviously positive and  $\Theta_g + c_\beta$ is known to be non-negative when $g$ is a non-negative circuit polynomial \cite[Thm.~3.8]{iliman2016amoebas}, we see that $L[g] \ge 0$. 
\end{proof}

To conclude the proof of our theorem, we observe that for all  $f \in \R[\vx]$ and $g \in C_n$ we have 
\begin{align*}
	\|g - f\|_K & \ge \max_{j \in \{0,1,2,3\}} \, \bigl|g(u^j,\ones_{n-1}) -f (u^j,\ones_{n-1}) \bigr| & & \text{(since $(u^j,\ones_{n-1}) \in K$)}
		\\ & \ge \frac{1}{4} \, \sum_{j=0}^3 \, \bigl|g(u^j,\ones_{n-1}) -f (u^j,\ones_{n-1})\bigr|
		\\ & \ge \frac{1}{4} \, L[g - f] 
		\\ & = \frac{1}{4} \, \bigl(L[g] - L[f] \bigr)
		\\ &  \ge - \frac{1}{4} \, L[f] & & \text{(by Claim)}. 
\end{align*} 
Consequently, 
\(
	\inf_{g \in C_n} \|g - f\|_K \ge - \frac{1}{4} L[f] 
\)
for every $f \in \R[\vx]$. We fix  
\[
	f  =  (x_1 - 1)^{2} (x_1 - u^2)^2 (x_1-u^3)^{2(d-2)} \in P_n. 
\] As   $f$ depends only on $x_1$, it can be evaluated on $\R$ by substituting a real value for $x_1$. By the choice of $f$, we have  $L[f] = f(1) - f(u) + f(u^2) + f(u^3) = - f(u) < 0$. This gives $\inf_{g \in C_n} \|g - f\|_K \ge \frac{1}{4} f(u) > 0$, as desired. 

\begin{rem*}
	It would be interesting to characterize all pairs $(n,d)$, for which the assertion of our theorem is true. 
\end{rem*} 

\bibliographystyle{plain}
\bibliography{lit}

\end{document}